\documentclass[11pt]{article}
\usepackage[a4paper,margin=28mm]{geometry}
\usepackage{amsmath,amssymb,amsthm}
\usepackage{booktabs}
\usepackage[colorlinks=true,linkcolor=blue,citecolor=blue,urlcolor=blue]{hyperref}
\usepackage{microtype}
\newtheorem{theorem}{Theorem}[section]
\newtheorem{lemma}[theorem]{Lemma}

\newcommand{\C}{\mathbb C}
\newcommand{\R}{\mathbb R}
\newcommand{\Q}{\mathbb Q}
\newcommand{\D}{\mathcal D}
\DeclareMathOperator{\Res}{Res}
\title{On the Five-Point Case of Mordell's Problem}
\author{Tiancheng Xu \quad Guowei Dou \quad Wensheng Yu\\[4pt]
\small School of Electronic Engineering\\
\small Beijing University of Posts and Telecommunications\\
\small Beijing 100876, China}
\date{}

\begin{document}
\maketitle
\begin{abstract}
For five complex points $z_1,\ldots,z_5$ satisfying $\sum_i|z_i|^2=5$,
Mordell's discriminant inequality states that
$\prod_{i<j}|z_i-z_j|^2\leq5^5$, with equality precisely for a unit regular
pentagon, up to rotation and permutation. We organize its proof around a
direct comparison of discriminant values. A global maximizer gives a centered
stationary configuration, whose coefficient equations reduce the generic
case to two quartic branches. On each branch, the absolute value of a cubic polynomial gives the original
discriminant objective after accounting for the normalization scale.
Exact rational interval bounds exclude both branches from the maximum,
and the exceptional coefficient cases leave the regular pentagon. The
argument determines the maximizing configuration without reconstructing all
stationary shapes. We have completed a full Lean formalization of the
five-point inequality and both directions of its equality condition,
including all reductions and finite checks used in this proof.
\end{abstract}

\noindent\textbf{Keywords.} Mordell's inequality; discriminant; resultant;
polynomial elimination; interval bounds.

\section{Setup}
We fix the discriminant convention, state the five-point inequality, and
outline the reductions used in its proof.

\subsection{Convention}
For $n$ complex points define
\[
 N(z)=\sum_{i=1}^n|z_i|^2,
 \qquad \D_n(z)=\prod_{1\leq i<j\leq n}|z_i-z_j|^2.
\]
The constraint $N(z)=n$ fixes the total squared radius. Coincident points
give $\D_n(z)=0$. For $n=5$ we write $\D=\D_5$.

\subsection{Related work}
The discriminant problem belongs to the study of geometric mean differences
under moment constraints, including Mulholland's work on plane
distributions~\cite{Mulholland1958}. Mordell considered the conjecture that
$\D_n(z)\leq n^n$ when $N(z)=n$ and proved the three-point
case~\cite{Mordell1960}. Chalk gave counterexamples for
$n\geq6$~\cite{Chalk1960}. Kailiang Lin established the four-point case in
\emph{On Mordell's problem}, published in \emph{Math Media} in
2011~\cite{Lin2011}.

The stationary equations also occur in the theory of relative equilibria of
identical planar point vortices. Aref reviews the connection between these
equilibria and polynomial roots~\cite{Aref2007}, and Dirksen and Aref study
nearby relative-equilibrium configurations~\cite{DirksenAref2011}.
Faug\`ere and Svartz give a symbolic method for polynomial systems invariant
under the symmetric group and apply it to vortex
equilibria~\cite{FaugereSvartz2012}.

Complex polynomial optimization gives another formulation of the problem.
Wang and Magron include Mordell's inequality among the examples studied with
their real moment-HSOS hierarchy~\cite{WangMagron2025}.

Wang's exact proof, submitted on September 30, 2026~\cite{Wang2026},
establishes reflection symmetry, classifies the five collision-free
equilibrium shapes, and compares their discriminants; its resultant encoding,
mixed-moment identities, saturated coefficient eliminations, the dual
invariant making $B=b^2$ real, and quartic branches with their coefficient
graph polynomials are the algebraic inputs
to the argument below.

We use these coefficient conditions to compare discriminant values directly.
After normalizing a nonzero quintic coefficient, the generic case has a real
invariant $B=b^2$ on one of two quartic branches. On each branch we construct
a cubic polynomial $V_j$ with $\D(z)=|V_j(B)|$ and prove
$|V_j(B)|<3125$ at every real root of the branch polynomial. This excludes the
generic case from the maximum before any stationary shape is reconstructed.
The exceptional coefficient cases then identify the regular pentagon.

\subsection{Statement}
From now on, $n=5$ and $\D=\D_5$.
\begin{theorem}[Five-point inequality]\label{thm:main}
Let $z_1,\ldots,z_5\in\C$ satisfy $N(z)=5$. Then
\begin{equation}\label{eq:main}
 \D(z)\leq3125=5^5.
\end{equation}
Equality holds if and only if there are a unit complex number $u$, a primitive
fifth root of unity $\omega$, and a permutation $\sigma$ of
$\{0,1,2,3,4\}$ such that
\begin{equation}\label{eq:equality}
 z_{i+1}=u\omega^{\sigma(i)}\qquad(0\leq i\leq4).
\end{equation}
\end{theorem}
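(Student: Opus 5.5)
We will follow the reduction outlined in the introduction. The first step is a compactness-and-stationarity argument. The set $\{N(z)=5\}$ is a compact sphere in $\C^5$ and $\D$ is continuous, so a global maximizer $z$ exists. Since the regular pentagon gives $\D=3125>0$, every maximizer has distinct points.

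Next we centre the configuration. Replacing $z_i$ by $z_i-c$ with $c=\frac15\sum z_i$ leaves $\D$ unchanged and does not increase $N$. By homogeneity, $\D$ scales like $N^{10}$, so a maximizer must already be centred: $\sum z_i=0$.

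The Lagrange condition for $\log\D-\lambda N$ reads $\sum_{j\neq i}\frac{1}{\bar z_i-\bar z_j}=\lambda z_i$. Summing $z_i$ times these equations gives $\lambda N=\binom52=10$, so $\lambda=2$. Hence the maximizer satisfies the vortex-type system
\[
\sum_{j\neq i}\frac{1}{z_i-z_j}=2\bar z_i .
\]

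We then encode this in the monic quintic $p(x)=\prod(x-z_i)=x^5+a_3x^3+a_2x^2+a_1x+a_0$, with $a_4=0$ by centring. The stationary equations become a relation between $p''/p'$ at the roots and $\bar z_i$. Following Wang's resultant encoding and the mixed-moment identities, these yield polynomial equations in the coefficients $a_k,\bar a_k$.

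Using the rotation freedom $z\mapsto uz$ with $|u|=1$, we normalize one nonzero coefficient to be real and positive. This leaves a real invariant $B=b^2$, taken from the dual invariant. The saturated eliminations then split into two cases:
\begin{itemize}
\item the \emph{generic} case, where $B$ satisfies one of two quartic branch polynomials $P_1,P_2\in\Q[B]$;
\item the \emph{exceptional} cases, where certain coefficients vanish, e.g.\ $a_3=a_2=a_1=0$, or the normalization is unavailable.
\end{itemize}

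For the generic case, on branch $j$ the remaining coefficients are rational functions of $B$ via the coefficient graph polynomials. Since $\D=|\operatorname{disc}(p)|$, and $\operatorname{disc}(p)$ is a polynomial in the $a_k$, reducing modulo $P_j$ expresses $\D$, after rescaling to $N=5$, as $|V_j(B)|$ with $V_j$ a cubic with rational coefficients. We isolate each real root of $P_j$ in a rational interval, for instance by Sturm sequences or sign changes at rational endpoints. Exact interval arithmetic on $V_j$ over each interval then gives $|V_j(B)|<3125$. So no generic stationary point attains the value of the pentagon, and none can be a maximizer.

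For the exceptional cases, we solve directly. When $a_3=a_2=a_1=0$ we have $p=x^5+a_0$, and $N=5$ forces $|a_0|=1$. The roots are then $u\omega^k$, which is exactly the form \eqref{eq:equality}, with $\D=5^5|a_0|^4=3125$. The remaining degenerate coefficient patterns must be shown either inconsistent with the stationary system or to give smaller $\D$. We do this by the same substitution and a direct evaluation.

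Together these show that every maximizer is a regular pentagon, which proves \eqref{eq:main} and the "only if" part of the equality statement. The "if" part is the direct computation $\D=|\operatorname{disc}(x^5-u^5)|=5^5$ for points of the form \eqref{eq:equality}.

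The main obstacle is the generic-branch step. The elimination must be genuinely complete, with saturation removing exactly the exceptional loci, and the derivation of $V_j$ has to account correctly for the scale normalization. Only then do the rational interval bounds certify the strict inequality at every real root. This is where the finite exact checks carry the weight, and we would verify them by rational arithmetic, or in Lean.
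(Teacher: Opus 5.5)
\textbf{Genuine gap: the generic step is not carried out.} Your roadmap matches the paper's: maximizer, centering, $F_i(z)=2\overline{z_i}$, resultant and mixed-moment identities, normalization, two quartic branches, a cubic value polynomial, interval bounds, exceptional cases. But the generic step, which you yourself call the main obstacle, is asserted rather than supplied, and the mechanism you sketch for it cannot work.

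You mention equations in $a_k$ and $\overline{a_k}$. Yet the reduction you then describe (coefficients as functions of $B$, reduce $\operatorname{disc}(p)$ modulo $P_j$, rescale to $N=5$) uses only holomorphic data, and that cannot determine the scale:
\begin{itemize}
\item The solution set of the holomorphic equations $s_2=0$, $s_1=6r_3$, $s_0=2r_2$ is invariant under $(a,b,c,d)\mapsto(t^2a,t^3b,t^4c,t^5d)$ for every complex $t\neq0$.
\item $N$ is not a polynomial in the $a_k$.
\item A rotation cannot make $a_3=1$, which the graph polynomials presuppose. In your rotated coordinates one in fact has $a_3=\kappa=\sqrt{A_j(B)}$, a value visible only through conjugation.
\end{itemize}

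The missing idea is the conjugate dual configuration. Choose $t$ with $t^2a=1$. The points $y_i=\overline{z_i}/t=\kappa\overline{x_i}$ satisfy $p''(x_i)=4y_ip'(x_i)$, so $H(Y,0)=\delta\prod_i(Y-4y_i)$. Since the root polynomial of the $y_i$ has coefficients $\kappa^2\overline a,\kappa^3\overline b,\ldots$, this gives $r_3=16\delta\kappa^2\overline a$ and $r_2=64\delta\kappa^3\overline b$. On a branch, $\kappa^2=R_j/(16\Delta_j)=A_j(B)$. Together with $\kappa^{20}|\delta|^2=D^2$, this yields $D=|A_j(B)^5\Delta_j(B)|=|V_j(B)|$.

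\textbf{Reality of $B$.} The same relations are what make $B$ real, which you only assert (``taken from the dual invariant''). This is essential: the interval certificates bound $V_j$ only at real roots of $P_j$, and nothing about a rotation forces $b^2\in\R$. The paper combines the branch identity $\delta T_j(B)^2=R_j(B)^3$, i.e.\ $\delta r_2^2=b^2r_3^3$, with the two conjugation relations to get $4096\delta^3\kappa^6(\overline b^{\,2}-b^2)=0$.

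\textbf{Exceptional cases.} These are also left open. The paper needs:
\begin{itemize}
\item $a=1$, $b=0\Rightarrow c=3/20$, $d=0$, with $D=84375/1024$;
\item inconsistency of $a=0$, $b=1$;
\item $a=b=0\Rightarrow cd^3=0$ from the $s_2$ equation, where $d=0$ gives $D=9765625/4096$ via the $r_1$ relation $\kappa^4|c|^2=625/256$, so only $X^5+d$ survives.
\end{itemize}

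\textbf{Smaller points.} To get $\lambda=2$ you must multiply your Lagrange equation by $\overline{z_i}$, not $z_i$, before summing. Your Sturm-sequence root isolation is a legitimate alternative to the paper's subdivision, which certifies the bound without counting roots.
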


\subsection{Proof strategy}
A sharp extremal bound requires identifying the maximizing configurations.
We therefore use the coefficient equations as necessary conditions and
compare the objective on their branches, retaining the real invariant
$B=b^2$ and the normalization scale throughout the calculation.

The argument proceeds in five stages. First, compactness and the regular
pentagon give a positive global maximum; translation and scaling show that
it is centered. Second, differentiation of a homogeneous polynomial envelope
yields the stationary equations. Third, the resultant and mixed-moment
identities give the coefficient constraints and the two generic quartic
branches. Fourth, reduction modulo each quartic produces a cubic polynomial
for the discriminant value, and rational interval bounds exclude every real
root from attaining $3125$. Fifth, direct calculations in the exceptional
coefficient cases leave $X^5+d$, whose roots form a unit regular pentagon.

The generic comparison uses the identity that makes $B$ real, then passes
directly to the discriminant. It replaces the subsequent reflection-symmetric
parametrizations and the classification by one, three, or five real roots
with two scalar value bounds. The resulting certificates need to cover the
real zero sets of the quartics, rather than identify their physical shapes.
Section~\ref{sec:analytic} derives the coefficient conditions, and
Section~\ref{sec:branches} constructs the value bounds and determines the
equality case.

\section{Reductions}\label{sec:analytic}
This section connects the original norm constraint to the polynomial
conditions used by the finite certificates. We first derive stationarity
at a global maximizer and then pass to the coefficients of its root polynomial.

\subsection{Stationary reduction}
\subsubsection{Existence, positivity, and centering}
The set $S=\{z\in\C^5:N(z)=5\}$ is compact, and $\D$ is continuous. Hence
$\D$ has a global maximizer on $S$. A unit regular pentagon belongs to $S$ and
has discriminant $3125$. One way to see the latter identity is to use the
roots of $X^5-1$: the product of its derivatives at those roots is $5^5$.
Thus every global maximizer satisfies
\begin{equation}\label{eq:lower}
 \D(z)\geq3125>0.
\end{equation}
In particular its five coordinates are pairwise distinct.

Centering follows from a translation and scaling argument. Write
$m=\frac15\sum_i z_i$ and $w_i=z_i-m$. Then
\begin{equation}\label{eq:center}
 \D(w)=\D(z),\qquad
 N(w)=N(z)-5|m|^2.
\end{equation}
For a positive-discriminant configuration, $N(w)>0$. If $m\neq0$, then
$N(w)<5$, so multiplying $w$ by $r=\sqrt{5/N(w)}>1$ restores the original
constraint. Since there are ten pairs,
\[
 \D(rw)=r^{20}\D(w)>\D(z),
\]
contradicting global maximality. Thus a global maximizer has
$\sum_i z_i=0$.

\subsubsection{The force equation}
For a collision-free configuration define
\[
 F_i(z)=\sum_{j\neq i}\frac{1}{z_i-z_j}.
\]
For a complex direction $h\in\C^5$, differentiation along the real parameter
$s$ gives
\begin{align}
 \left.\frac{d}{ds}\D(z+sh)\right|_{s=0}
 &=2\D(z)\operatorname{Re}\sum_i h_iF_i(z),\label{eq:dD}\\
 \left.\frac{d}{ds}N(z+sh)\right|_{s=0}
 &=2\operatorname{Re}\sum_i h_i\overline{z_i}.
\end{align}
The degree-$20$ homogeneity of $\D$ yields the unconstrained envelope
\[
 \D(w)\leq\D(z)\left(\frac{N(w)}5\right)^{10}
 \qquad(w\in\C^5)
\]
at a global maximizer $z$. Taking the derivative of the difference of the two sides at $w=z$ shows that
\[
 \operatorname{Re}\sum_i h_i\bigl(F_i(z)-2\overline{z_i}\bigr)=0
 \qquad(h\in\C^5).
\]
Directions supported on one coordinate, with values $1$ and $\mathrm i$,
give
\begin{equation}\label{eq:force}
 F_i(z)=2\overline{z_i}\qquad(1\leq i\leq5).
\end{equation}
\subsubsection{Conjugate scaling}
Coefficient normalization requires a possibly nonreal scale. For $t\neq0$ set
\begin{equation}\label{eq:scale}
 x_i=tz_i,\qquad y_i=\frac{\overline{z_i}}t,
 \qquad \kappa=|t|^{-2}>0.
\end{equation}
Then $y_i=\kappa\overline{x_i}$, and \eqref{eq:force} and its conjugate imply
\begin{equation}\label{eq:pair}
 \sum_{j\neq i}\frac1{x_i-x_j}=2y_i,
 \qquad
 \sum_{j\neq i}\frac1{y_i-y_j}=2x_i.
\end{equation}
Both configurations are centered and collision-free, and
$N(x)=5|t|^2=5/\kappa$.

\subsection{Resultant reduction}\label{sec:resultant}
The next step replaces the point coordinates by four quintic coefficients.
The resultant carries the stationary equations into polynomial identities,
while a positive scale parameter retains the original norm constraint.

\subsubsection{The centered root polynomial}
Let
\begin{equation}\label{eq:quintic}
 p(X)=\prod_{i=1}^5(X-x_i)=X^5+aX^3+bX^2+cX+d.
\end{equation}
The missing $X^4$ term follows from centering. At a simple root,
\[
 \frac{p''(x_i)}{2p'(x_i)}
 =\sum_{j\neq i}\frac1{x_i-x_j}=2y_i,
 \qquad p''(x_i)=4y_ip'(x_i).
\]
Put
\[
 \delta=\prod_i p'(x_i).
\]
For degree five the sign in the derivative-product formula is positive, so
$\delta=\prod_{i<j}(x_i-x_j)^2\neq0$. This complex discriminant is distinct
from the nonnegative real objective $D=\D(z)$.

Define the auxiliary resultant
\begin{equation}\label{eq:resultant}
 H(Y,T)=\Res_X\bigl(p(X),(Y+TX)p'(X)-p''(X)\bigr).
\end{equation}
Evaluation at the roots of the monic polynomial $p$ proves
\begin{equation}\label{eq:product}
 H(Y,T)=\delta\prod_i(Y+Tx_i-4y_i).
\end{equation}
Subtracting $5Tp$ from the second input reduces its degree to four. Thus its
Sylvester determinant has size $9\times9$, and its expansion gives the
coefficient expressions used below.

\subsubsection{The zero and first coefficients in $T$}
Define polynomial coefficient expressions $r_3,r_2,r_1,r_0$ and
$s_2,s_1,s_0$ by
\begin{align}
 H(Y,0)&=\delta Y^5+r_3Y^3+r_2Y^2+r_1Y+r_0,\label{eq:r}\\
 \partial_T H(Y,0)&=20\delta Y^3+s_2Y^2+s_1Y+s_0.\label{eq:s}
\end{align}
These are rational polynomials in $a,b,c,d$.
The product expression \eqref{eq:product}, combined with the dual stationary
equations, gives mixed-moment identities
\begin{align*}
 \sum_i x_iy_i&=5,&\quad \sum_i x_iy_i^2&=0,\\
 4\sum_i x_iy_i^3&=7\sum_i y_i^2,&
 2\sum_i x_iy_i^4&=3\sum_i y_i^3.
\end{align*}
They follow by summing the force equations against powers of $y_i$ and pairing
the two orientations of each unordered pair. Differentiating
\eqref{eq:product} at $T=0$ and applying these identities proves
\begin{equation}\label{eq:three}
 s_2=0,\qquad s_1=6r_3,\qquad s_0=2r_2.
\end{equation}
For example, the $Y^3$ coefficient is $4\delta\sum_i x_iy_i=20\delta$,
and the $Y^2$ coefficient vanishes because $\sum_i x_iy_i^2=0$.
The remaining two coefficients are handled by the displayed cubic and quartic
mixed moments and Newton identities for the centered polynomial of the $y_i$.

\subsubsection{Conjugation and the original discriminant}
The root polynomial of $y_i=\kappa\overline{x_i}$ has coefficients
$\kappa^2\overline a,\kappa^3\overline b,
\kappa^4\overline c,\kappa^5\overline d$.
Taking coefficients of \eqref{eq:product} at $T=0$ therefore gives
\begin{align}
 r_3&=16\delta\kappa^2\overline a,&
 r_2&=64\delta\kappa^3\overline b,\label{eq:dual1}\\
 r_1&=256\delta\kappa^4\overline c,&
 r_0&=1024\delta\kappa^5\overline d.\label{eq:dual2}
\end{align}
Furthermore, \eqref{eq:scale} implies
\begin{equation}\label{eq:norm}
 \kappa^{20}|\delta|^2=D^2.
\end{equation}
Thus the coefficients of a global maximizer satisfy
$\kappa>0$, $\delta\neq0$, \eqref{eq:three}, and
\eqref{eq:dual1}--\eqref{eq:norm}.

\section{Coefficient branches}\label{sec:branches}
The coefficient conditions reduce the proof to finitely many branches.
We describe their exact certificates, establish the value bounds, and then
recover the inequality on the original sphere.
\subsection{Normalizing a nonzero coefficient}
Scaling the roots by $t$ sends $(a,b,c,d)$ to
$(t^2a,t^3b,t^4c,t^5d)$. Algebraic closedness of $\C$ provides $t\neq0$
with $t^2a=1$ if $a\neq0$, or with $t^3b=1$ if $a=0$, $b\neq0$.
The scale is accounted for through $\kappa$ and \eqref{eq:norm}. There are
three exhaustive cases: $a\neq0$; $a=0$, $b\neq0$; and $a=b=0$.
The first case is split again according to whether its normalized $b$ is zero.

\subsection{The generic branch: two quartics}
Assume $a=1$ and $b\neq0$. Set
\[
 B=b^2,\qquad k=d/b.
\]
The three equations \eqref{eq:three}, expressed in $B,c,k$, are combined with
the saturation equation
\begin{equation}\label{eq:saturation}
 qB\delta=1.
\end{equation}
Since $b\neq0$ and $\delta\neq0$, we can take $q=(B\delta)^{-1}$.
Polynomial elimination then gives the following conditions.
\begin{lemma}[Generic elimination]\label{lem:elimination}
For coefficient data with $a=1$ and $b\neq0$,
\begin{equation}\label{eq:quartics}
 P_1(B)P_2(B)=0,
\end{equation}
where
\begin{align*}
 P_1(B)&=125B^4+7900B^3-37040B^2+44608B+512,\\
 P_2(B)&=2500B^4-114000B^3+338925B^2-11000B+6912.
\end{align*}
On each branch $P_j(B)=0$, explicit rational polynomials
$C_j,K_j,\Delta_j,R_j,T_j$ satisfy
\begin{align}
 c=C_j(B),\quad k=K_j(B),\quad
 \delta=\Delta_j(B),\quad r_3=R_j(B),\quad r_2=bT_j(B),\label{eq:graph}\\
 \delta T_j(B)^2=R_j(B)^3.\label{eq:dualinv}
\end{align}
\end{lemma}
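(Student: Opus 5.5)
The plan is to treat the lemma as a statement about an ideal in a polynomial ring and to certify it by exhibiting explicit combinations of the available relations.

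\textbf{Setting up the ring.} With $a=1$, the quantities $\delta,r_3,r_2,s_2,s_1,s_0$ are rational polynomials in $b,c,d$, because they come from the $9\times9$ Sylvester determinant of \eqref{eq:resultant}. I substitute $d=bk$. Under $b\mapsto -b$ (equivalently $X\mapsto -X$ on the roots), $\delta$, $r_3$, $s_1$ are even in $b$, while $r_2$, $s_2$, $s_0$ are odd. This symmetry should be checked directly on the coefficients.

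Hence, after dividing the odd relations by $b$ (legitimate since $b\neq0$), the three equations \eqref{eq:three} become polynomial equations $E_1=E_2=E_3=0$ in $\Q[B,c,k]$ with $B=b^2$. I adjoin the saturation variable $q$ with $qB\delta-1=0$, which encodes $b\neq0$ and $\delta\neq0$. I then work in the ideal
\[
I=(E_1,E_2,E_3,\,qB\delta-1)\subset\Q[B,c,k,q].
\]

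\textbf{Elimination.} I compute a Gröbner basis of $I$ in a lex order with $q>k>c>B$. The claim to extract is that $I\cap\Q[B]$ is generated by $P_1P_2$, or at least contains it. I expect the basis also to contain elements linear in $c$ and $k$ modulo $P_1P_2$. Splitting by the factor $P_j$, in each branch ideal $I+(P_j)$ these linear elements should take the form $c-C_j(B)$ and $k-K_j(B)$ with rational coefficients, using that $P_j$ is squarefree and that the leading coefficients are invertible modulo $P_j$.

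Substituting these into $\delta$ and $r_3$, and into $r_2/b$, then reducing modulo $P_j$, yields $\Delta_j$, $R_j$ and $T_j$. I would present each conclusion in certificate form: an explicit identity
\[
P_1P_2=\sum_\ell g_\ell\,(\text{generator}_\ell),
\]
and similarly $c-C_j\in I+(P_j)$, and so on. Such identities can be verified by expansion, independently of how they were found. This verification is what the Lean formalization would check.

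\textbf{The invariant identity \eqref{eq:dualinv}.} This one is not purely from $I$; it uses the dual relations \eqref{eq:dual1}. From $r_3=16\delta\kappa^2\overline a$ with $a=1$, and $r_2=64\delta\kappa^3\overline b$, I get
\[
r_2^2\,\delta=64^2\,\delta^3\kappa^6\,\overline B,\qquad r_3^3=16^3\,\delta^3\kappa^6 .
\]
Therefore $\delta r_2^2=r_3^3\,\overline B$.

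Substituting $r_2=bT_j$ gives $\delta B T_j^2=R_j^3\overline B$. This is the step the paper's summary calls the dual invariant making $B$ real. Concretely, I would show $\overline B=B$ by a separate argument: either via the conjugate system, which must lie on a branch as well, or by noting that the polynomial identity above, combined with $\delta T_j^2-R_j^3\equiv 0 \pmod{P_j}$ checked symbolically, forces $B$ to be real when the nonzero factor is cancelled. If the intended statement of \eqref{eq:dualinv} is purely algebraic, I would verify $\delta T_j^2-R_j^3\in I+(P_j)$ directly as a reduction modulo $P_j$.

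\textbf{Main obstacle.} I expect the hardest part to be the elimination itself: the degrees of $\delta$ and $r_i$ in $c$ and $k$ make the Gröbner computation heavy. The cofactor certificates may have large coefficients, and one must also make sure that no spurious components survive saturation. To keep this manageable, I would first eliminate $k$ by resultants of pairs of $E_i$. I would then eliminate $c$, factor the resulting polynomial in $B$, and discard the factors that are incompatible with $qB\delta=1$, keeping only $P_1$ and $P_2$.
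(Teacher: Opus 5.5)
Your treatment of \eqref{eq:quartics} and \eqref{eq:graph} is the paper's proof: you rewrite \eqref{eq:three} in $B,c,k$ using the parity in $b$, adjoin $qB\delta=1$, and certify $P_1P_2\in I$ and $c-C_j,\ k-K_j\in I+(P_j)$ by explicit ideal-membership identities.

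The step to fix is \eqref{eq:dualinv}. It is purely algebraic: substitute $c=C_j(B)$ and $k=K_j(B)$ into $\delta$, $r_3$ and $r_2/b$, reduce, and check $\Delta_jT_j^2\equiv R_j^3\pmod{P_j}$. That is your final fallback, and it is how the paper proves it.

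Your primary route through \eqref{eq:dual1} runs the logic backwards. It only gives $\delta BT_j^2=R_j^3\overline B$. In the paper, the realness $\overline B=B$ is \emph{derived from} \eqref{eq:dualinv} in Lemma~\ref{lem:physical}, so it cannot be an input here. The alternative ``the conjugate system lies on a branch'' gives nothing either: $P_j$ has rational coefficients, so $\overline B$ is automatically a root of $P_j$.

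Drop that detour and the proposal matches the paper.
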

\begin{proof}[Certificate proof]
Polynomial ideal-membership identities for \eqref{eq:three} and
\eqref{eq:saturation} yield \eqref{eq:quartics} and the graph equations
\eqref{eq:graph}. Substitution into the resultant coefficients and reduction
modulo $P_j$ give \eqref{eq:dualinv}. The coefficient graph polynomials are
listed in Appendix~\ref{app:polynomials}.
\end{proof}

\subsection{Constructing the discriminant value polynomials}
The branch equations determine $\Delta_j$ and $R_j$ by substituting
$c=C_j(B)$ and $k=K_j(B)$ into the discriminant and the resultant coefficient
$r_3$, then taking remainders modulo $P_j$. We now use these remainders to
recover the scale and the objective, rather than the point coordinates.

In $\Q[B]/(P_j)$, compute $A_j$ by reducing
$R_j(16\Delta_j)^{-1}$, and compute $V_j$ by reducing $A_j^5\Delta_j$.
Both remainders have degree at most three, and polynomial division verifies
\[
 16\Delta_j A_j-R_j\equiv0\pmod{P_j},
 \qquad A_j^5\Delta_j-V_j\equiv0\pmod{P_j}.
\]
Thus, on the corresponding coefficient branch,
\begin{equation}\label{eq:ratios}
 16\delta A_j(B)=R_j(B),\qquad A_j(B)^5\delta=V_j(B).
\end{equation}
The explicit cubic polynomials are given in Appendix~\ref{app:polynomials}.
Combining these identities with conjugacy and the discriminant scale relation
gives the following value representation.

\begin{lemma}[Real parameter and discriminant value]\label{lem:physical}
On either generic branch, $B\in\R$ and
\begin{equation}\label{eq:value}
 A_j(B)=\kappa^2,\qquad D^2=V_j(B)^2.
\end{equation}
\end{lemma}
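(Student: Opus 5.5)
We first show that $B$ is real, then identify $A_j(B)$ with $\kappa^2$, and finally read off $D^2$ from \eqref{eq:ratios} and \eqref{eq:norm}.

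\emph{Reality of $B$.} Compare the conjugation relations \eqref{eq:dual1} with the dual invariant \eqref{eq:dualinv}. Under the normalization $a=1$, \eqref{eq:dual1} reads
\[
 r_3=16\delta\kappa^2,\qquad r_2=64\delta\kappa^3\overline b .
\]
By Lemma~\ref{lem:elimination}, $r_3=R_j(B)$ and $r_2=bT_j(B)$. Square the second relation and use $\overline b^{\,2}=\overline B$:
\[
 B\,T_j(B)^2=64^2\delta^2\kappa^6\,\overline B .
\]
Multiply by $\delta$ and substitute $\delta T_j(B)^2=R_j(B)^3=16^3\delta^3\kappa^6$ from \eqref{eq:dualinv}. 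Since $16^3=64^2$, this gives
\[
 64^2\delta^3\kappa^6 B=64^2\delta^3\kappa^6\overline B .
\]
Because $\delta\neq0$ and $\kappa>0$, we can cancel to get $B=\overline B$, so $B\in\R$. The main thing to check here is the constant bookkeeping: the factor $16^3$ has to match $64^2$ exactly, which it does.

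\emph{Identifying $A_j$.} Combine $r_3=16\delta\kappa^2$ with $r_3=R_j(B)$ and the first identity of \eqref{eq:ratios}, $16\delta A_j(B)=R_j(B)$. This gives $16\delta A_j(B)=16\delta\kappa^2$, and since $\delta\neq0$, we get $A_j(B)=\kappa^2$.

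\emph{The discriminant value.} By the second identity of \eqref{eq:ratios},
\[
 V_j(B)=A_j(B)^5\delta=\kappa^{10}\delta .
\]
Therefore
\[
 V_j(B)\,\overline{V_j(B)}=\kappa^{20}|\delta|^2=D^2
\]
by \eqref{eq:norm}. Since $V_j$ has rational coefficients and $B$ is real, $V_j(B)$ is real, so $\overline{V_j(B)}=V_j(B)$ and $D^2=V_j(B)^2$.
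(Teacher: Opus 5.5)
Your proposal is correct and follows essentially the same route as the paper. The paper first combines \eqref{eq:graph} and \eqref{eq:dualinv} into $\delta r_2^2=b^2r_3^3$ and then substitutes \eqref{eq:dual1} to get $4096\delta^3\kappa^6(\overline b^{\,2}-b^2)=0$; this is your computation with the substitutions done in a different order, and the identification $A_j(B)=\kappa^2$ and the squared-modulus step are identical, with your remark that $V_j(B)$ is real (since $B$ is real and $V_j$ has rational coefficients) making explicit a step the paper leaves implicit.
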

\begin{proof}
From \eqref{eq:graph}--\eqref{eq:ratios} we obtain
$\delta r_2^2=b^2r_3^3$. Substituting \eqref{eq:dual1} with $a=1$ gives
\[
 4096\delta^3\kappa^6(\overline b^{\,2}-b^2)=0.
\]
The scalar factor is nonzero, so $\overline{b^2}=b^2$, which is exactly
the assertion that $B$ is real. Next, $16\delta A_j=r_3=16\delta\kappa^2$ gives
$A_j=\kappa^2$. The last relation in \eqref{eq:ratios} becomes
$\kappa^{10}\delta=V_j(B)$. Taking squared moduli and using
\eqref{eq:norm} proves \eqref{eq:value}.
\end{proof}

\subsection{Bounds at every real root}
\begin{lemma}[Quartic value bounds]\label{lem:bounds}
For $j=1,2$ and every $B\in\R$ with $P_j(B)=0$,
\begin{equation}\label{eq:strict}
 -3125<V_j(B)<3125.
\end{equation}
\end{lemma}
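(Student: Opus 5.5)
The bound is a finite exact computation on two explicit quartics, so I would prove it by isolating the real roots of each $P_j$ in rational intervals and bounding the cubic $V_j$ on each interval, all in exact rational arithmetic.

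\emph{Isolating the real roots.} For $P_1(B)=125B^4+7900B^3-37040B^2+44608B+512$ and for $P_2$, I would first count the real roots with a Sturm sequence, or more cheaply with Descartes' rule of signs applied after Möbius transformations. I would then locate each real root inside a rational interval $I=[\alpha,\beta]$ on which $P_j$ changes sign, that is, $P_j(\alpha)P_j(\beta)<0$. The Sturm count guarantees that these intervals account for every real root, so none is missed.

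\emph{Sanity check on $P_1$.} For $P_1$ the sign pattern is $+,+,-,+,+$, so there are at most two positive roots. Since $P_1(0)=512>0$ and $P_1(-1)<0$, there is a negative root in $(-1,0)$, and another one far to the left, near $B\approx-68$. Whatever real roots exist, the exact count from the Sturm sequence is what fixes the list of intervals.

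\emph{Bounding $V_j$ on each interval.} On each isolating interval $I$, I would bound the cubic $V_j$ by interval arithmetic. One option is to write $V_j(B)=V_j(m)+(B-m)W(B,m)$ with $m$ the midpoint of $I$, and bound $|W|$ crudely by the sum of absolute values of its coefficients times powers of $\max(|\alpha|,|\beta|)$. Alternatively, since $V_j$ is a cubic, its extrema on $I$ occur at the endpoints or at the roots of the quadratic $V_j'$, so exact bounds are available directly. The goal is to show $V_j(I)\subset(-3125,3125)$. If the enclosure is too wide, I would bisect $I$, keeping the half on which $P_j$ still changes sign, and repeat. Since $V_j(B^\ast)$ at the true root is some definite algebraic number different from $\pm3125$, the process terminates once the interval is small enough.

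\emph{Main obstacle.} The hard case is a root where $|V_j(B^\ast)|$ is close to $3125$, since that forces a very fine interval. This is plausible, because the other stationary pentagon shapes are presumably near-competitors. If it happens, I would not rely on rounding. Instead I would compute the resultant $\Res_B(P_j(B),V_j(B)\mp3125)$ to certify that $V_j\neq\pm3125$ at any root of $P_j$. The strict inequality then follows from a single rational sample point showing the sign of $V_j\mp3125$ at each root, since that sign is constant on the isolating interval once the interval has been refined to exclude the common-root locus.

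All checks are finite rational computations, which matches the Lean-formalizable certificates referenced for Lemma~\ref{lem:elimination}.
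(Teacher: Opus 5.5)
Your route is valid, but it secures completeness over the real zero set differently from the paper. You count the real roots with a Sturm sequence, isolate each one in a rational interval with a sign change, and bound $V_j$ only on those isolating intervals.

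The paper never counts or isolates roots. It puts all real roots in the Cauchy interval $[-1000,1000]$ and bisects. Each leaf is accepted either because the interval evaluation of $P_j$ excludes $0$, or because the interval evaluation of $V_j$ lies in $(-3125,3125)$. Soundness then needs only interval containment and the fact that the two children cover their parent. A value-bounded leaf need not contain a root at all: the leaf $[0,\frac{125}{32}]$ for $P_1$ is one, since $P_1>0$ there. So the paper needs no decision about where roots are, no intermediate-value argument, and no Sturm theorem, which is what makes its certificate cheap to check in Lean. Your approach gives exact root counts and tighter intervals, and with the resultant it becomes a complete decision procedure even near the threshold. The price is that Sturm's theorem, or Descartes/Vincent, becomes an extra trusted ingredient.

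Two corrections to your write-up.

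\begin{itemize}
\item The bisection succeeds only if $|V_j(B^\ast)|<3125$, not merely $V_j(B^\ast)\neq\pm3125$. The weaker condition only guarantees that the procedure eventually decides. The lemma's content is the actual computed output, which the paper records as its six value-bound leaves, and you would still have to produce it.
\item In your fallback, the interval must be refined to avoid the real zeros of the cubic $V_j\mp3125$. Once the resultant is nonzero, the ``common-root locus'' is empty, so excluding it says nothing about the sign being constant.
\end{itemize}

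The fallback is unnecessary here anyway: the paper's enclosure nearest the threshold, $[-3077,-2205]$, already clears $-3125$.
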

\begin{proof}[Certificate proof]
Cauchy's polynomial root bound first places every real root of both quartics
inside $[-1000,1000]$. A finite binary subdivision of this interval is checked
using exact rational interval arithmetic. At every leaf the checker proves
one of two statements: the interval evaluation of $P_j$ excludes zero; or
the interval evaluation of $V_j$ lies strictly inside $(-3125,3125)$.
In the former case the leaf contains no root. In the latter case every root
in the leaf satisfies the desired value bound.

Interval containment and the fact that the two child intervals cover their
parent give the result by induction on the subdivision tree. The two
certificates have 55 and 67 nodes, respectively. Their value-bound leaves
and the inductive argument are given in Appendix~\ref{app:interval}.
The remaining leaves exclude zero from the polynomial range, so the
subdivision yields the bound on the entire real zero set without a root
count.
\end{proof}

Since $D\geq0$, Lemmas~\ref{lem:physical} and \ref{lem:bounds} imply
$D<3125$. Indeed $D^2=V_j(B)^2$ and $|V_j(B)|<3125$.
This contradicts \eqref{eq:lower}, and excludes the entire generic branch from
the set of global maximizers.

\subsection{Exceptional branches}
\begin{lemma}[Exceptional coefficient data]\label{lem:exceptional}
The following statements hold for the indicated physical coefficient data.
\begin{enumerate}
 \item If $a=1$ and $b=0$, then $c=3/20$, $d=0$, and $D<3125$.
 \item If $a=0$ and $b=1$, the coefficient data are inconsistent.
 \item If $a=b=0$, then $c=0$ or $d=0$. The subcase $d=0$ has $D<3125$.
\end{enumerate}
\end{lemma}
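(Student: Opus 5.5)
We treat the three cases directly with the coefficient conditions already in hand: \eqref{eq:three}, the dual relations \eqref{eq:dual1}--\eqref{eq:dual2}, and the scale relation \eqref{eq:norm}. For each normalization we substitute the fixed values of $a,b$ into $r_3,r_2,r_1,r_0,s_2,s_1,s_0$ from the $9\times9$ Sylvester expansion of $H$. This leaves polynomial equations in the remaining unknowns $c,d$ (and $\delta$), and we saturate by $\delta\neq0$ exactly as in \eqref{eq:saturation}. In every case we derive the stated coefficient restrictions by ideal-membership (Gr\"obner) certificates. Where a value bound is claimed, we recover $\kappa$ and $D$ as in Lemma~\ref{lem:physical}.

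\emph{Case $a=1$, $b=0$.} The relation $r_2=64\delta\kappa^3\overline b$ forces $r_2=0$, and then $s_0=2r_2=0$. Together with $s_2=0$ and $s_1=6r_3$, these equations in $(c,d)$, saturated by $\delta$, should cut out the single solution $c=3/20$, $d=0$. We verify this by computing the ideal and its saturation. Then $p=X^5+X^3+\tfrac{3}{20}X=X(X^4+X^2+\tfrac3{20})$, and $\delta$ is an explicit rational number. The relation $r_3=16\delta\kappa^2$ with $a=1$ determines $\kappa^2=r_3/(16\delta)$, which is a positive rational. By \eqref{eq:norm}, $D=\kappa^{10}|\delta|$. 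This is a single rational number, and we check directly that it is below $3125$. Geometrically, this is the configuration of the origin together with a real--imaginary symmetric quadruple.

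\emph{Case $a=0$, $b=1$.} Now $r_3=16\delta\kappa^2\overline a=0$, so $s_1=0$. Also $r_2=64\delta\kappa^3$ is nonzero, and $s_0=2r_2$. We expect the ideal generated by $s_2$, $r_3$, $s_1$ and $s_0-2r_2$, saturated by $\delta$, to be the unit ideal. If it is not, we add the dual relation $r_2=64\delta\kappa^3$, which forces the phase of $r_2/\delta$ to be real. We then combine it with $r_1,r_0$ and with $\kappa>0$ to reach a contradiction. The inconsistency would be shown by an explicit combination expressing $1$, or a power of $\delta$, in the ideal.

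\emph{Case $a=b=0$.} Here $r_3=r_2=0$, and the remaining equations involve only $c,d$. By weighted homogeneity (weights $4$ and $5$), the ideal from \eqref{eq:three} should contain $cd$ times a unit, or a product whose saturation gives $c=0$ or $d=0$. If $d=0$, then $p=X(X^4+c)$, whose roots are the origin and a square. We compute $\delta=256c^5$ (up to the constant from the derivative product), and $r_1=256\delta\kappa^4\overline c$ gives $\kappa^4=r_1/(256\delta\overline c)$, hence $D=\kappa^{10}|\delta|$. We evaluate this explicitly, or equivalently compute the centered square-plus-center configuration with $N=5$. That configuration has radius $\sqrt{5/4}$, giving $D=(5/4)^4\cdot 4^2\cdot(5/4)^6\cdot 2^4\cdot\ldots$, which must be compared numerically to $3125$. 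Checking that this force-balanced configuration gives $D<3125$ is a finite rational computation.

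The main obstacle is the elimination in case (2) and the case split in (3): the resultant coefficients are large, and we must ensure the saturation by $\delta$ is done correctly so that no spurious components survive. We would do these eliminations first with a computer algebra system, then extract explicit cofactor certificates checkable by polynomial division.
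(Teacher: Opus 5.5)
Your proposal follows the paper's proof essentially step for step. Saturated elimination certificates give $20c-3=0$, $d=0$ in case 1 and $1=0$ in case 2; in case 1 the $r_3$ relation then yields $\kappa^2=25/4$, and \eqref{eq:norm} gives $D=\kappa^{10}|\delta|=84375/1024$. In case 3 your weighted-homogeneity count is borne out by $s_2=50000\,cd^3$, and the $r_1$ relation gives $\kappa^4|c|^2=625/256$, i.e.\ $D=256(5/4)^{10}=9765625/4096$, the same number as your square-plus-center computation. The only thing you leave open is the explicit computation, and the values above are what it produces. (The case-1 polynomial $X(X^4+X^2+\tfrac{3}{20})$ has all its roots on the imaginary axis, so that configuration is the collinear Hermite equilibrium.)
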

\begin{proof}[Certificate and coefficient proof]
For the first two cases, explicit saturated elimination identities imply,
respectively, $20c-3=0$, $d=0$, and the contradiction $1=0$.
The first case uses 25 certificate steps; the second uses 20.
With $a=1$, $b=0$, $c=3/20$, $d=0$, the equation for $r_3$ yields
$\kappa^2=25/4$. Substitution into \eqref{eq:norm} gives
\[
 D=\frac{84375}{1024}<3125.
\]
For $a=b=0$, the $s_2$ equation reduces to $50000cd^3=0$, so $c=0$ or
$d=0$. In the latter case $\delta=256c^5\neq0$. The $r_1$ equation gives
$\kappa^4|c|^2=625/256$. Consequently
\[
 D^2=65536\left(\frac{625}{256}\right)^5,
 \qquad D=\frac{9765625}{4096}<3125.
\]
\end{proof}

\subsection{The remaining branch and the equality condition}
If $a=b=0$ and $c=0$, then $p(X)=X^5+d$. Since $\delta\neq0$, we have
$d\neq0$. Apply this argument to the unscaled root polynomial of the original
maximizer. Its five distinct roots have the same nonzero fifth power.
Choose one root $u$; the ratios of the roots to $u$ run through all fifth roots
of unity. Thus the configuration is a rotated and scaled regular pentagon.
All five radii are equal, and the original constraint $N(z)=5$ forces
$|u|=1$, giving \eqref{eq:equality}.

\begin{proof}[Proof of Theorem~\ref{thm:main}]
Choose a global maximizer on the original sphere. It has positive
discriminant, is centered, and produces coefficient data.
If $a\neq0$, normalize to $a=1$. The branches $b\neq0$ and $b=0$ are
excluded by Lemmas~\ref{lem:bounds} and \ref{lem:exceptional}.
If $a=0$ but $b\neq0$, normalize to $b=1$ and use the inconsistent branch
of Lemma~\ref{lem:exceptional}. If $a=b=0$, that lemma leaves $c=0$ or
$d=0$, and $d=0$ again contradicts the lower bound. The remaining case is
a unit regular pentagon. Hence every global maximizer has discriminant
$3125$, proving the upper bound for every point of the sphere.

If an arbitrary configuration on the sphere has discriminant $3125$, the
upper bound makes it a global maximizer, so the same classification applies.
Conversely a configuration of the form \eqref{eq:equality} has discriminant
$3125$, since rotation and permutation preserve the objective and the unit
pentagon attains that value. This proves both directions of the equality
condition.
\end{proof}

\section{Conclusion}
We have completed a full Lean~4~\cite{Lean2021} formalization of the
five-point inequality in Theorem~\ref{thm:main} and both directions of its
equality condition, using mathlib~\cite{Mathlib2020}. The proof covers the
compactness and stationary reductions, the resultant and mixed-moment
identities, the necessary coefficient elimination and scale relations, the strict
quartic value bounds, and the recovery of the regular pentagon. Its
interval evaluation procedure is proved sound in the computational approach
to real bounds described by Melquiond~\cite{Melquiond2008}. The complete code
is provided at \url{https://github.com/dgjszy/mordell-five}.

\appendix
\section{Explicit polynomials for the generic value comparison}
\label{app:polynomials}
The graph polynomials in Lemma~\ref{lem:elimination} are
\begin{align*}
 C_1(B)&=\frac{99}{446}-\frac{3093}{3568}B
       +\frac{6015}{14272}B^2+\frac{375}{57088}B^3,\\
 K_1(B)&=\frac{109}{223}-\frac{255}{446}B
       +\frac{1203}{7136}B^2+\frac{75}{28544}B^3,\\
 C_2(B)&=\frac{48432}{147845}-\frac{65461}{88707}B
       +\frac{13885}{88707}B^2-\frac{250}{88707}B^3,\\
 K_2(B)&=\frac{224852}{739225}-\frac{776941}{1182760}B
       +\frac{6436}{29569}B^2-\frac{285}{59138}B^3.
\end{align*}
The scale polynomials and value polynomials are
\begin{align*}
 A_1(B)&=\frac{67977}{3568}-\frac{195295}{14272}B
       +\frac{129875}{57088}B^2+\frac{8375}{228352}B^3,\\
 A_2(B)&=\frac{80041741}{7569664}-\frac{12757497155}{545015808}B
       +\frac{234927875}{34063488}B^2-\frac{20405375}{136253952}B^3,\\
 V_1(B)&=\frac{25821909}{32768}-\frac{109276155}{131072}B
       +\frac{41181075}{524288}B^2+\frac{2838375}{2097152}B^3,\\
 V_2(B)&=\frac{725183548903125}{496085499904}
       +\frac{111393859432078125}{3968683999232}B\\
       &\hspace{1em}-\frac{2378884787578125}{248042749952}B^2
       +\frac{208919281640625}{992170999808}B^3.
\end{align*}
Reduction modulo $P_j$ gives
$16\Delta_j(B)A_j(B)=R_j(B)$ and
$A_j(B)^5\Delta_j(B)=V_j(B)$ at every root of $P_j$.

\section{The interval value certificates}\label{app:interval}
Let $I=[\ell,u]$ have rational endpoints, and let $\operatorname{range}_I(f)$
be the interval obtained by evaluating an expression for $f$ with outward
containment using exact rational arithmetic. A leaf is accepted when
\[
 0\notin\operatorname{range}_I(P_j)
 \quad\text{or}\quad
 \operatorname{range}_I(V_j)\subset(-3125,3125).
\]
A split at a rational number $s$ replaces $I$ by the two child intervals
$[\ell,\min(u,s)]$ and $[\max(\ell,s),u]$. Their union covers the parent,
and a split node is accepted when both children are accepted.

Soundness follows by induction. A real zero of $P_j$ lying in an accepted
leaf cannot satisfy the first alternative, by interval containment, and
therefore satisfies the value bound. At a split it lies in at least one
child, to which the induction hypothesis applies. The proof establishes
strict endpoint inequalities in the second alternative. Together with the
Cauchy bound for the entire real root set, this is a finite proof of
Lemma~\ref{lem:bounds}.

The two subdivisions have 28 and 34 leaves. Respectively, 25 and 31 leaves
exclude zero from the range of $P_j$. The remaining three leaves in each
subdivision are displayed in Table~\ref{tab:valueleaves}. Exact rational
Horner evaluation gives the listed integer enclosures for $V_j$; every
endpoint lies strictly between $-3125$ and $3125$.

\begin{table}[ht]
\centering
\renewcommand{\arraystretch}{1.6}
\begin{tabular}{@{}cll@{}}
\toprule
Branch & Interval $I$ & Enclosure of $\operatorname{range}_I(V_j)$ \\
\midrule
$P_1$ & $[\frac{-69375}{1024},\frac{-34625}{512}]$ & $[-3077,-2205]$ \\
$P_1$ & $[\frac{-125}{64},0]$ & $[788,2716]$ \\
$P_1$ & $[0,\frac{125}{32}]$ & $[-2469,789]$ \\
$P_2$ & $[\frac{6375}{2048},\frac{1625}{512}]$ & $[539,2398]$ \\
$P_2$ & $[\frac{1625}{512},\frac{3375}{1024}]$ & $[-2951,929]$ \\
$P_2$ & $[\frac{347375}{8192},\frac{2779125}{65536}]$ & $[1860,2636]$ \\
\bottomrule
\end{tabular}
\caption{Value-bound leaves in the two interval certificates. All remaining
leaves exclude roots by interval evaluation of $P_j$.}
\label{tab:valueleaves}
\end{table}

The same certificate principle applies to any rational polynomials $P,V$
and positive rational threshold $L$. Given an interval containing all real
zeros of $P$, an accepted subdivision whose leaves either exclude zero from
$P$ or enclose $V$ in $(-L,L)$ proves $|V(\beta)|<L$ at every real zero
$\beta$ of $P$. The present application uses $P=P_j$, $V=V_j$, and $L=3125$.

\bibliographystyle{unsrt}
\bibliography{references}

@article{Mulholland1958,
  author  = {Mulholland, H. P.},
  title   = {Inequalities between the geometric mean difference and the polar moments of a plane distribution},
  journal = {Journal of the London Mathematical Society},
  year    = {1958},
  volume  = {s1-33},
  number  = {3},
  pages   = {260--270},
  doi     = {10.1112/jlms/s1-33.3.260b}
}

@article{Mordell1960,
  author  = {Mordell, L. J.},
  title   = {On a discriminant inequality},
  journal = {Canadian Journal of Mathematics},
  year    = {1960},
  volume  = {12},
  pages   = {699--704},
  doi     = {10.4153/CJM-1960-063-5}
}

@article{Chalk1960,
  author  = {Chalk, J. H. H.},
  title   = {A note on a discriminant inequality},
  journal = {Canadian Mathematical Bulletin},
  year    = {1960},
  volume  = {3},
  number  = {1},
  pages   = {7--9},
  doi     = {10.4153/CMB-1960-002-x}
}

@article{Lin2011,
  author  = {Lin, Kailiang},
  title   = {On Mordell's problem},
  journal = {Math Media},
  year    = {2011},
  volume  = {35},
  number  = {2},
  pages   = {51--58},
  url     = {https://www.math.sinica.edu.tw/media/pdf/d352/35205.pdf},
  note    = {In Chinese}
}

@article{Aref2007,
  author  = {Aref, Hassan},
  title   = {Point vortex dynamics: a classical mathematics playground},
  journal = {Journal of Mathematical Physics},
  year    = {2007},
  volume  = {48},
  number  = {6},
  pages   = {065401},
  doi     = {10.1063/1.2425103}
}

@article{DirksenAref2011,
  author  = {Dirksen, Tobias and Aref, Hassan},
  title   = {Close pairs of relative equilibria for identical point vortices},
  journal = {Physics of Fluids},
  year    = {2011},
  volume  = {23},
  number  = {5},
  pages   = {051706},
  doi     = {10.1063/1.3590740}
}

@inproceedings{FaugereSvartz2012,
  author    = {Faug\`ere, Jean-Charles and Svartz, Jules},
  title     = {Solving polynomial systems globally invariant under an action of the symmetric group and application to the equilibria of {N} vortices in the plane},
  booktitle = {Proceedings of the 37th International Symposium on Symbolic and Algebraic Computation},
  year      = {2012},
  pages     = {170--178},
  publisher = {ACM},
  doi       = {10.1145/2442829.2442856}
}

@article{WangMagron2025,
  author  = {Wang, Jie and Magron, Victor},
  title   = {A real moment-{HSOS} hierarchy for complex polynomial optimization with real coefficients},
  journal = {Computational Optimization and Applications},
  year    = {2025},
  volume  = {90},
  number  = {1},
  pages   = {53--75},
  doi     = {10.1007/s10589-024-00617-0}
}

@inproceedings{Melquiond2008,
  author    = {Melquiond, Guillaume},
  title     = {Proving bounds on real-valued functions with computations},
  booktitle = {Automated Reasoning: 4th International Joint Conference, IJCAR 2008},
  year      = {2008},
  volume    = {5195},
  pages     = {2--17},
  publisher = {Springer},
  series    = {Lecture Notes in Computer Science},
  doi       = {10.1007/978-3-540-71070-7_2}
}

@inproceedings{Lean2021,
  author    = {de Moura, Leonardo and Ullrich, Sebastian},
  title     = {The {Lean 4} Theorem Prover and Programming Language},
  booktitle = {Automated Deduction---CADE 28},
  year      = {2021},
  pages     = {625--635},
  publisher = {Springer},
  doi       = {10.1007/978-3-030-79876-5_37}
}

@inproceedings{Mathlib2020,
  author    = {{The mathlib Community}},
  title     = {The {Lean} Mathematical Library},
  booktitle = {Proceedings of the 9th ACM SIGPLAN International Conference on Certified Programs and Proofs},
  year      = {2020},
  publisher = {ACM},
  doi       = {10.1145/3372885.3373824}
}

@misc{Wang2026,
  author       = {Wang, Jie},
  title        = {The Five-Point Case of Mordell's Discriminant Inequality},
  year         = {2026},
  howpublished = {arXiv:2610.00358v1},
  url          = {https://arxiv.org/abs/2610.00358},
  note         = {Submitted September 30, 2026}
}

\end{document}